\newcommand{\ed}{

\appendix

\section{$\mathsf{S}_f(\scrA,\scrB)$}

Properties closely related to our $\mathsf{U}_{f}(\scrA,\scrB)$ were considered in the literature.
Consider, for each $f\in\NN$, the following selection hypothesis.

\begin{description}
\item[$\mathsf{S}_{f}(\scrA,\scrB)$] For all $\cU_1,\cU_2,\dots\in\scrA$,
there are finite $\cF_1\sbst\cU_1,\cF_2\sbst\cU_2,\dots$ such that
such that $|\cF_n|\le f(n)$ for all $n$, and $\Union_n\cF_n\in\scrB$.
\end{description}

In \cite{GFTM95, BukCie04} 
it is proved that for each $f\in\NN$, $\mathsf{S}_{f}(\rmO,\rmO)=\sone(\rmO,\rmO)$.
Indeed, by Remark \ref{ttt} we have that for all $\scrA$,
$$\mathsf{S}_{f}(\scrA,\rmO)=\mathsf{U}_n(\scrA,\rmO)=\sone(\scrA,\rmO).$$

A family $\scrB$ of open covers of $X$ is \emph{finitely thick} \cite{strongdiags} if:
\be
\itm If $\cU\in\scrB$ and for each $U\in\cU$:
\begin{quote}
$\cF_U$ is a finite
nonempty family of open sets such that for each $V\in\cF_U$,
$U\sbst V\neq X$,
\end{quote}
then $\Union_{U\in\cU}\cF_U\in\scrB$.
\itm If $\cU\in\scrB$ and $\cV=\cU\cup\cF$ where $\cF$ is finite
and $X\nin\cF$, then $\cV\in\scrB$.\footnote{We will not use Item (2) of the definition of \emph{finitely thick} here.}
\ee
Many families of ``rich'' covers considered in the literature,
including $\rmO,\Omega,\Ga$ \cite{coc1,coc2}, are finitely thick.
Also, for each of these families, each pair of elements has a joint refinement in
the same family.

The case $\scrA=\scrB=\Omega$ of the following theorem was proved in \cite{GFTM95, Val98}.

\bthm\label{jjj}
Assume that each pair of elements of $\scrA$ has a joint refinement in $\scrA$, and $\scrB$ is finitely thick.
For each $f\in\NN$, $\mathsf{S}_{f}(\scrA,\scrB)=\sone(\scrA,\scrB)$.
\ethm
\bpf
As $1\le f(n)$ for all $n$, $\sone(\scrA,\scrB)$ implies $\mathsf{S}_{f}(\scrA,\scrB)$.
To prove the remaining implication, assume that $X$ satisfies $\mathsf{S}_{f}(\scrA,\scrB)$.

Let $\cU_1,\cU_2,\dots\in\scrA(X)$.
Let $s(n)=f(1)+f(2)+\dots+f(n)$ for all $n$.
For each $n$, take $\cV_n\in\scrA(X)$ refining $\cU_1,\dots,\cU_{s(n)}$.

Apply $\mathsf{S}_{f}(\scrA,\scrB)$ to the sequence $\cV_1,\cV_2,\dots$,
to obtain $\cF_1\sbst\cV_{1},\cF_{2}\sbst\cV_{2},\dots$, such that $|\cF_n|\le f(n)$ for
all $n$, and $\Union_n\cF_n\in\scrB(X)$.

Fix $n$. For each $k\in\{s(n-1)+1,\dots,s(n)\}$, pick $U_k\in\cU_k$ such that each member of $\cF_n$ is contained in
some $U_k$. As $\scrB$ is finitely thick, $\{U_k : k\in\N\}\in\scrB(X)$.
\epf

Thus, in our context, the scheme $\mathsf{S}_{f}(\scrA,\scrB)$ does not introduce new properties.
As we have seen in the present paper, this is not the case for $\mathsf{U}_{f}(\scrA,\scrB)$.

\end{document}}
\newcommand{\splus}{\!+\!}
\newcommand{\Setting}[7]{\xymatrix@R=4pt@C=7pt{#1\ar@{-}[r]&#2\ar@{-}[r]&#3\\&#4\ar@{-}[u]\\
#5\ar@{-}[uu]\ar@{-}[r] & #6\ar@{-}[u]\ar@{-}[r] & #7\ar@{-}[uu]}}
\newcommand{\intvl}[2]{{[#1(#2),\allowbreak #1(#2\splus1))}}
\newcommand{\bq}{\begin{quote}}
\newcommand{\eq}{\end{quote}}
\newcommand{\cl}[1]{\overline{#1}}
\newcommand{\CH}{the Continuum Hypothesis}
\newcommand{\inv}{^{-1}}
\newcommand{\Cantor}{{\{0,1\}^\N}}
\newcommand{\Ga}{\Gamma}
\newcommand{\N}{\mathbb{N}}
\newcommand{\NN}{{\N^{\N}}}
\newcommand{\roth}{{[\N]^{\oo}}}
\newcommand{\Fin}{{[\N]^{<\oo}}}
\newcommand{\sseq}[1]{\{#1 : n\in\N\}}
\newcommand{\op}{\operatorname}
\newcommand{\scrA}{\mathscr{A}}
\newcommand{\scrB}{\mathscr{B}}
\newcommand{\cB}{\mathcal{B}}
\newcommand{\cF}{\mathcal{F}}
\newcommand{\rmO}{\mathrm{O}}
\newcommand{\Q}{\mathbb{Q}}
\newcommand{\R}{\mathbb{R}}
\newcommand{\cU}{\mathcal{U}}
\newcommand{\Union}{\bigcup}
\newcommand{\cV}{\mathcal{V}}
\newcommand{\Impl}{\Rightarrow}
\long\def\forget#1\forgotten{}
\newcommand{\fb}{\mathfrak{b}}
\newcommand{\fc}{\mathfrak{c}}
\newcommand{\fd}{\mathfrak{d}}
\newcommand{\oo}{\infty}
\newcommand{\x}{\times}
\newcommand\comp{^{\text{\tt c}}}
\newcommand{\nin}{\notin}
\newcommand{\sbst}{\subseteq}
\newcommand{\spst}{\supseteq}
\newcommand{\sm}{\setminus}
\newcommand{\as}{\subseteq^*}
\newcommand{\non}{\op{non}}
\newtheorem{thm}{Theorem}[section]
\newcommand{\bthm}{\begin{thm}} \newcommand{\ethm}{\end{thm}}
\newtheorem{prop}[thm]{Proposition}
\newcommand{\bprp}{\begin{prop}} \newcommand{\eprp}{\end{prop}}
\newtheorem{fact}[thm]{Fact}
\newcommand{\bfct}{\begin{fact}} \newcommand{\efct}{\end{fact}}
\newtheorem{lem}[thm]{Lemma}
\newcommand{\blem}{\begin{lem}} \newcommand{\elem}{\end{lem}}
\newtheorem{claim}[thm]{Claim}
\newcommand{\bclm}{\begin{claim}} \newcommand{\eclm}{\end{claim}}
\newtheorem{cor}[thm]{Corollary}
\newcommand{\bcor}{\begin{cor}} \newcommand{\ecor}{\end{cor}}
\newtheorem{conj}[thm]{Conjecture}
\newcommand{\bcnj}{\begin{conj}} \newcommand{\ecnj}{\end{conj}}
\newtheorem{prob}[thm]{Problem}
\newcommand{\bprb}{\begin{prob}} \newcommand{\eprb}{\end{prob}}
\theoremstyle{definition}
\newtheorem{defn}[thm]{Definition}
\newcommand{\bdfn}{\begin{defn}} \newcommand{\edfn}{\end{defn}}
\theoremstyle{remark}
\newtheorem{rem}[thm]{Remark}
\newcommand{\brem}{\begin{rem}} \newcommand{\erem}{\end{rem}}
\newtheorem{cnv}[thm]{Convention}
\newcommand{\bcnv}{\begin{cnv}} \newcommand{\ecnv}{\end{cnv}}
\newtheorem{exam}[thm]{Example}
\newcommand{\bexm}{\begin{exam}} \newcommand{\eexm}{\end{exam}}
\newcommand{\bpf}{\begin{proof}} \newcommand{\epf}{\end{proof}}
\newcommand{\be}{\begin{enumerate}}
\newcommand{\ee}{\end{enumerate}}
\newcommand{\bi}{\begin{itemize}}
\newcommand{\itm}{\item}
\newcommand{\ei}{\end{itemize}}
\newcommand{\sone}{\mathsf{S}_1}
\newcommand{\sfin}{\mathsf{S}_\mathrm{fin}}
\newcommand{\ufin}{\mathsf{U}_\mathrm{fin}}
\title[Menger and Hurewicz Problems]{Menger's and Hurewicz's Problems: Solutions from ``The Book'' and refinements}
\author{Boaz Tsaban}
\address{Department of Mathematics, Bar-Ilan University, Ramat Gan 52900, Israel}
\email{tsaban@math.biu.ac.il}
\urladdr{http://www.cs.biu.ac.il/\~{}tsaban}
\keywords{%
Menger property,
Hurewicz property,
Rothberger property,
Selection principles,
special sets of real numbers.
}
\subjclass{%
Primary: 37F20; 
Secondary 26A03, 
03E75 
03E17 
}
\begin{document}

\begin{abstract}
We provide simplified solutions of Menger's and Hure\-wicz's problems
and conjectures, concerning generalizations of $\sigma$-compactness.
The reader who is new to this field will find a self-contained treatment
in Sections 1, 2, and 5.

Sections 3 and 4 contain new results, based on the mentioned simplified solutions.
The main new result is that there are concrete uncountable sets of reals $X$ (indeed, $|X|=\fb$),
which have the following property:
\begin{quote}
Given point-cofinite covers $\cU_1,\cU_2,\dots$ of $X$, there are for each $n$ sets $U_n,V_n\in\cU_n$,
such that each member of $X$ is contained in all but finitely many of the sets
$U_1\cup V_1,U_2\cup V_2,\dots$
\end{quote}
This property is strictly stronger than Hurewicz's covering property.
Miller and the present author showed that
one cannot prove the same result if we are only allowed to pick one set from each $\cU_n$.
\end{abstract}

\maketitle

\centerline{\emph{Dedicated to Professor Gideon Schechtman}}

\tableofcontents

\forget
{\sf Todo?
\be
\itm Dispose of Lemma \ref{ksp}? For this, need a direct proof that $\fc$-concentrated
is not $\sigma$-compact.
\itm E.g., in the dichotomic proofs we may just use an unbounded set of cardinality $\fb$
in the case $\fb<\fd$.
\itm Sierpi\'nski's cute argument that Luzin sets are not Hurewicz:
Fix a countable dense subset $\sseq{a_n}$ of $\R\sm L$.
For all $n,m$, let $$U^n_m=\R\sm \cl{B}_\frac{1}{m}(a_n).$$
The sets $U^n_m$ increase with $m$, and
$L\sbst \Union_m U^n_m$ for each $n$.
If $L$ was Hurewicz, then $L\sbst \Union_n U^n_{m_n}$ for appropriate
$m_n$. As $\Union_n B_{1/m_n}(a_n)$ is open dense, $\Union_n U^n_{m_n}$ is meager,
a contradiction.
\ee
}
\forgotten

\section{Menger's Conjecture}

In 1924, Menger \cite{Menger24} introduced the following
basis property for a metric space $X$:
\begin{quote}
For each basis $\cB$ for the topology of $X$, there are $B_1,B_2,\dots\in\cB$ such that $\lim_{n\to\infty}\op{diam}(B_n) = 0$, and $X=\Union_nB_n$.
\end{quote}
Soon thereafter, Hurewicz \cite{Hure25} observed\footnote{Hurewicz stated his observation
without proof. A proof was provided by Lelek in \cite[Theorem 2]{Lelek}.}
that Menger's basis
property can be reformulated as follows:
\begin{quote}
For all given open covers $\cU_1,\cU_2,\dots$ of $X$,
there are finite $\cF_1\sbst\cU_1,\cF_2\sbst\cU_2,\dots$ such that $\Union_n\cF_n$ is a cover of $X$.
\end{quote}
We introduce some convenient notation, suggested by Scheepers in \cite{coc1}.
We say that $\cU$ is a \emph{cover} of $X$ if $X=\Union\cU$,\footnote{We follow the set theoretic standard that,
for a family of sets $\cF$, $\Union\cF$ means the union of all elements of $\cF$.}
but $X\nin\cU$.
Let $X$ be a topological space, and $\scrA,\scrB$ be families of covers of $X$.
We consider the following statements.
\begin{description}
\item[$\sone(\scrA,\scrB)$] For all $\cU_1,\cU_2,\dots\in\scrA$, there are
$U_1\in\cU_1,U_2\in\cU_2,\dots$ such that $\sseq{U_n}\in\scrB$.
\item[$\sfin(\scrA,\scrB)$] For all $\cU_1,\cU_2,\dots\in\scrA$, there are
finite $\cF_1\sbst\cU_1,\cF_2\sbst\cU_2,\dots$ such that $\Union_n\cF_n\in\scrB$.
\item[$\ufin(\scrA,\scrB)$] For all $\cU_1,\cU_2,\dots\in\scrA$, none containing
a finite subcover, there are finite $\cF_1\sbst\cU_1,\cF_2\sbst\cU_2,\dots$ such that $\sseq{\Union\cF_n}\in\scrB$.
\end{description}
Let $\rmO(X)$ be the family of all open covers of $X$.
We say that $X$ satisfies $\sone(\rmO,\rmO)$ if the statement $\sone(\rmO(X),\rmO(X))$ holds.
This way, $\sone(\rmO,\rmO)$ is a property of topological spaces.
A similar convention applies to all properties of this type.

Hurewicz's observation tells that for metric spaces, Menger's basis property is equivalent to
$\sfin(\rmO,\rmO)$. This is a natural generalization of compactness.
Note that indeed, every \emph{$\sigma$-compact} space (a countable union of compact spaces) satisfies $\sfin(\rmO,\rmO)$.
Menger made the following conjecture.

\bcnj[Menger \cite{Menger24}]
A metric space $X$ satisfies $\sfin(\rmO,\rmO)$ if, and only if, $X$ is $\sigma$-compact.
\ecnj
Hurewicz proved that when restricted to analytic spaces, Menger's Conjecture is true.

Recall that a set $M\sbst\R$ is \emph{meager} (or \emph{of Baire first category})
if $M$ is a union of countably many nowhere dense sets.
A set $L\sbst\R$ is a \emph{Luzin set} if $L$ is uncountable, and
for each meager set $M$, $L\cap M$ is countable.

Luzin sets can be constructed assuming \CH{}: Every meager set is contained in a Borel (indeed, $F_\sigma$) meager set. Let $M_\alpha$, $\alpha<\aleph_1$ be all Borel meager sets.
For each $\alpha<\aleph_1$,
take $x_\alpha\in\R\sm\Union_{\beta<\alpha}M_\beta$. Then $L=\{x_\alpha : \alpha<\aleph_1\}$ is a Luzin set.

A subset of $\R$ is \emph{perfect} if it is nonempty, closed, and has no isolated points.
In \cite{Hure27}, Hurewicz quotes an argument of Sierpi\'nski, proving (more than) the following.

\bthm[Sierpi\'nski]\label{LM}
Every Luzin set satisfies $\sfin(\rmO,\rmO)$, and is not $\sigma$-compact.
\ethm
\bpf
Let $\cU_1,\cU_2,\dots$ be open covers of a Luzin set $L\sbst\R$.
Let $D=\sseq{d_n}$ be a dense subset of $L$. For each $n$, pick $U_n\in\cU_n$ such that $d_n\in U_n$.
Let $U=\Union_nU_n$. Then $L\sm U$ is nowhere dense, and thus countable.
Enumerate $L\sm U=\sseq{x_n}$.
For each $n$, pick $V_n\in\cU_n$
such that $x_n\in V_n$. Then $L\sm U\sbst\Union_nV_n$, and thus $\sseq{U_n,V_n}$ is a cover of $L$, with at most two elements from each $\cU_n$.\footnote{The interested reader may wish to show in a similar manner that actually,
every Luzin set satisfies $\sone(\rmO,\rmO)$. We will not use this fact.}

The following short argument for the remaining assertion was suggested to us by
Vadim Kulikov.
Assume that $L=\Union_nK_n$, a countable union of compact sets.
Then, for each $n$, $\R\sm K_n$ is open (since $K_n$ is closed) and dense (since $K_n\sbst L$).
Thus, $K_n$ is nowhere dense, and therefore $L$ is meager; a contradiction.
\epf

Thus, Menger's Conjecture is settled if one assumes \CH{}.
In 1988, Fremlin and Miller \cite{FM88} settled Menger's Conjecture in ZFC.
They used the concept of a scale, which we now define.
This concept is normally defined using $\NN$, but for our purposes it is
easier to work with $P(\N)$ (this will become clear later).

Let $P(\N)$ be the family of all subsets of $\N$, and $\Fin,\roth\sbst P(\N)$ denote
the family of all finite subsets of $\N$ and the family of all infinite subsets of $\N$,
respectively.
For $a\in\roth$ and $n\in\N$, $a(n)$ denotes the $n$-th element in the increasing enumeration of $a$.

For $a,b\in\roth$, let $a\le^* b$ mean: $a(n)\le b(n)$ for all but finitely many $n$.
A subset $Y$ of $\roth$ is \emph{dominating} if for each $a\in\roth$ there is $b\in Y$
such that $a\le^* b$. Let $\fd$ denote the minimal cardinality of a dominating subset
of $\roth$.
A \emph{scale} is a dominating set $S\sbst\roth$, which has a $\le^*$-increasing enumeration
$S=\{s_\alpha : \alpha<\fd\}$, that is, such that $s_\alpha\le^* s_\beta$ for all $\alpha<\beta<\fd$.

Scales require special hypotheses to be constructed.
Indeed, say that a subset $Y$ of $\roth$ is \emph{unbounded} if it is unbounded with respect to
$\le^*$, that is, for each $a\in\roth$ there is $b\in Y$ such that $b\not\le^* a$.
Let $\fb$ denote the minimal cardinality of an unbounded subset of $\roth$.
$\fb\le\fd$, and strict inequality is consistent. (Indeed, $\fb<\fd$ holds in the Cohen real model.)

\blem[folklore]\label{scalebd}
There is a scale if, and only if, $\fb=\fd$.
\elem
\bpf
$(\Leftarrow)$ Let $\{d_\alpha : \alpha<\fb\}\sbst\roth$ be dominating. For each $\alpha<\fb$,
choose $s_\alpha$ to be a $\le^*$-bound of $\{d_\beta,s_\beta : \beta<\alpha\}$.

$(\Impl)$ Let $S=\{s_\alpha : \alpha<\fd\}$ be a scale, and assume that $\fb<\fd$.
Let $\{b_\alpha : \alpha<\fb\}\sbst\roth$ be unbounded.
For each $\alpha$, take $\beta_\alpha<\fd$
such that $b_\alpha\le^* s_{\beta_\alpha}$.

Let $c\in\roth$ witness that $\{s_{\beta_\alpha} : \alpha<\fb\}$ is not dominating,
and let $\gamma<\fd$ be such that $c\le^* s_\gamma$.
For each $\alpha<\fb$, $s_\gamma\not\le^* s_{\beta_\alpha}$, and thus $b_\alpha\le^* s_{\beta_\alpha}\le^* s_\gamma$.
Thus, $\{b_\alpha : \alpha<\fb\}$ is bounded. A contradiction.
\epf

The canonical way to construct sets of reals from scales (more generally, from subsets of $P(\N)$)
is as follows.
$P(\N)$ is identified with Cantor's space $\Cantor$, via characteristic functions.
This defines the canonical topology on $P(\N)$.
Cantor's space is homeomorphic to the canonical middle-third Cantor set $C\sbst[0,1]$,
and the homeomorphism is (necessarily, uniformly) continuous in both directions.
Thus, subsets of $P(\N)$ exhibiting properties preserved by taking (uniformly)
continuous images may be converted into subsets of $[0,1]$ with the same properties.
We may thus work in $P(\N)$.

The \emph{critical cardinality} of a (nontrivial) property $P$ of set of reals, denoted $\non(P)$,
is the minimal cardinality of a set of reals $X$ such that $X$ does not have the property $P$.
The following is essentially due to Hurewicz \cite{Hure27}.

\blem[folklore]\label{md}
$\non(\sfin(\rmO,\rmO))=\fd$.
\elem
\bpf
$(\ge)$ Let $X$ be a set of reals with $|X|<\fd$. Let $\cU_1,\cU_2,\dots$ be open covers of $X$.
Since $X$ is Lindel\"of, we may assume that these covers are countable,
and enumerate them $\cU_n=\{U^n_m : m\in\N\}$.

Define for each $x\in X$ a set $a_x\in\roth$
by
$$a_x(n)=\min\{m>a_x(n-1) : x\in U^n_1\cup U^n_2\cup\dots\cup U^n_m\}.$$
As $|\{a_x : x\in X\}|<\fd$, there is (in particular)
$c\in\roth$ such that for each $x\in X$, $a_x(n)\le c(n)$ for some $n$.
Take $\cF_n=\{U^n_1,\dots,U^n_{c(n)}\}$ for
all $n$. Then $\Union_n\cF_n$ is a cover of $X$.

$(\le)$ Let $D$ be a dominating subset of $\roth$.
Consider the open covers $\cU_n=\{U^n_m : m\in\N\}$, $n\in\N$, where
$$U^n_m=\{a\in\roth : a(n)=m\}.$$
For all finite $\cF_1\sbst\cU_1,\cF_2\sbst\cU_2,\dots$, there is $x\in D$ such that for all but finitely many $n$,
$x(n)>\max\{m : U^n_m\in\cF_n\}$ (and thus $x\nin\Union\cF_n$).

But if $X$ satisfies $\sfin(\rmO,\rmO)$, then for all open covers $\cU_1,\cU_2,\dots$ of $X$, there are
finite $\cF_1\sbst\cU_1,\cF_2\sbst\cU_2,\dots$, such that for each $x\in X$, $x$ belongs to
$\Union\cF_n$ for infinitely many $n$: To see this, split the given sequence $\cU_1,\cU_2,\dots$ into infinitely many disjoint subsequences,
and apply $\sfin(\rmO,\rmO)$ to each of these subsequences separately.

Thus, dominating subsets of $\roth$ do not satisfy $\sfin(\rmO,\rmO)$.
\epf

Let $\kappa$ be an infinite cardinal.
A set of reals $X$ is \emph{$\kappa$-concentrated} on a set $Q$ if,
for each open set $U$ containing $Q$, $|X\sm U|<\kappa$.

\begin{lem}[folklore \cite{sfh}]\label{concen}
Assume that a set of reals $X$ is $\fc$-concentrated on a countable set $Q$.
Then $X$ does not contain a perfect set.
\end{lem}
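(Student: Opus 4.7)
My plan is to proceed by contradiction. Suppose $P\sbst X$ is a perfect set; the goal is to exhibit an open set $U\spst Q$ with $|X\sm U|\ge\fc$, which directly contradicts the assumption that $X$ is $\fc$-concentrated on $Q$.

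The crux will be to produce a perfect subset $P'\sbst P$ disjoint from $Q$. Granted such a $P'$, the rest is immediate: $U:=\R\sm P'$ is open; since $P'\cap Q=\emptyset$, we have $Q\sbst U$; and since $P'\sbst P\sbst X$, the set $X\sm U$ contains $P'$, so $|X\sm U|\ge|P'|=\fc$.

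To construct $P'$, I will run a standard Cantor scheme inside $P$. Enumerate $Q$ as $\setseq{q_n}$, and build nonempty closed sets $P_s\sbst P$ indexed by finite binary sequences $s\in 2^{<\N}$ such that $P_{s\cat 0}$ and $P_{s\cat 1}$ are disjoint subsets of $P_s$, the diameters of the $P_s$ tend to $0$ with $|s|$, and $q_n\nin P_s$ whenever $|s|=n$. At each stage this is possible because $P$ is perfect, so every nonempty relatively open subset of $P$ is uncountable; in particular, after deleting the single point $q_n$ from $P_s$ we can still find two disjoint nonempty closed subsets of small diameter inside the remainder. Setting $P':=\bigcap_n\Union_{s\in 2^n}P_s$ then gives a homeomorphic copy of Cantor space contained in $P\sm Q$; it is perfect and has cardinality $\fc$.

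The only real obstacle is the bookkeeping for this Cantor scheme, and it is routine once one uses that ``perfect'' implies ``every nonempty relatively open subset is uncountable,'' so removing a single point always leaves enough room to branch.
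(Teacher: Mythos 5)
Your proof is correct and follows essentially the same route as the paper: both arguments produce a perfect set $C\sbst P\sm Q$ and then take $U=\R\sm C$ as the open set witnessing the failure of $\fc$-concentration. The only difference is that you build $C$ directly by a Cantor scheme dodging the points of $Q$, whereas the paper invokes Alexandroff's theorem that uncountable Borel sets contain perfect sets --- and the paper's own footnote already notes your direct construction as an equivalent alternative.
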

\begin{proof}
Assume that $X$ contains a perfect set $P$.
Then $P\sm Q$ is Borel and uncountable.
A classical result of Alexandroff tells that every uncountable
Borel set contains a perfect set. Let $C\sbst P\sm Q$ be a perfect set.\footnote{As $Q$ is countable, one can
alternatively prove directly that $P\sm Q$ contains a perfect set.}
Then $U=\R\sm C$ is open and contains $Q$, and $C=P\sm U\sbst X\sm U$ has cardinality $\fc$.
Thus, $X$ is not $\fc$-concentrated on $Q$.
\end{proof}

\blem[Cantor--Bendixon]\label{ksp}
Every uncountable $\sigma$-compact set $X\sbst\R$ contains a perfect set.
\elem
\bpf
By moving to a subset, we may assume that $X$ is an uncountable compact, and thus
closed, set. By the Cantor--Bendixon Theorem, $X$ contains a perfect set.
\epf

\bthm[Fremlin--Miller \cite{FM88}]\label{FMThm}
Menger's Conjecture is false.
\ethm
\bpf
As perfect sets of reals have cardinality continuum, we have by Lemma \ref{ksp}
that if $\fb<\fd$, then any set of reals of cardinality $\fb$ is a counter-example.

Thus, assume that $\fb=\fd$ (this is the interesting case), and let $S=\{s_\alpha : \alpha<\fd\}\sbst\roth$ be a scale
(Lemma \ref{scalebd}).

$S\cup\Fin$ satisfies $\sfin(\rmO,\rmO)$: This is similar to the argument about Luzin sets
satisfying $\sfin(\rmO,\rmO)$. Given open covers $\cU_1,\cU_2,\dots$ of $S\cup\Fin$, take
$U_1\in\cU_1,U_2\in\cU_2,\dots$, such that $\Fin\sbst\Union_nU_n$. We can do that because
$\Fin$ is countable. Let $U=\Union_nU_n$. $P(\N)\sm U$ is closed and thus compact.
For each $n$, the evaluation map $e_n:\roth\to\N$ defined by $e_n(a)=a(n)$ is continuous.
Thus, $e_n[P(\N)\sm U]$ is compact and thus finite, for all $n$.
Therefore, there is a $\le^*$-bound $b$ for $P(\N)\sm U$. Take $\alpha<\fd$ such that $b<^* s_\alpha$.
Then
$$S\sm U = S\cap (P(\N)\sm U) \sbst \{s_\beta : \beta<\fd, s_\beta\le^* b\}\sbst \{s_\beta : \beta<\alpha\}$$
has cardinality $<\fd$, and thus satisfies $\sfin(\rmO,\rmO)$. Let $\cF_1\sbst\cU_1,\cF_2\sbst\cU_2,\dots$
be such that $S\sm U\sbst\Union_n\cF_n$. Then $S\cup\Fin\sbst\Union_n\cF_n\cup\{U_n\}$.

$S\cup\Fin$ is not $\sigma$-compact: We have just seen that it is $\fd$-concentra\-ted on the countable
set $\Fin$. Use Lemmata \ref{ksp} and \ref{concen}.
\epf

A reader not familiar with dichotomic proofs may be perplexed by the proof of the Fremlin--Miller Theorem \ref{FMThm}.
It gives a ZFC result by considering an undecidable statement.
Indeed, it shows that there is a certain set of reals, but does not tell us what this set is
(unless we know in advance whether $\fb<\fd$ or $\fb=\fd$). Another way to view this is as follows.

Sets of reals $X$ satisfying $P$ because $|X|<\non(P)$ are in a sense \emph{trivial} examples for this property.
From this point of view, the real question is, given a property $P$, whether there are
sets of reals of cardinality at least $\non(P)$, which satisfy $P$. The proof of Theorem \ref{FMThm}
answers this in the positive only when $\fb=\fd$.
However, with a small modification we get a complete answer.

\bdfn
A \emph{$\fd$-scale} is a dominating set $S=\{s_\alpha : \alpha<\fd\}\sbst\roth$, such that for all $\alpha<\beta<\fd$, $s_\beta\not\le^* s_\alpha$.
\edfn

\blem
There are $\fd$-scales.
\elem
\bpf
Let $\{d_\alpha : \alpha<\fd\}\sbst\roth$ be dominating. For each $\alpha<\fd$,
choose $s_\alpha$ to be a witness that $\{s_\beta : \beta<\alpha\}$ is not dominating, such that
in addition, $d_\alpha\le^* s_\alpha$.
\epf

An argument similar to that in the proof of Theorem \ref{FMThm} gives the following.

\blem
Every $\fd$-scale is $\fd$-concentrated on $\Fin$.\qed
\elem

We therefore have the following.

\bthm[Bartoszy\'nski--Tsaban \cite{ideals}]\label{dscale}
For each $\fd$-scale $S$, $S\cup\Fin$ satisfies $\sfin(\rmO,\rmO)$, and is not $\sigma$-compact.
In other words, $S\cup\Fin$ is a counter-example to Menger's Conjecture.\qed
\ethm

Theorem \ref{dscale} is generalized in Tsaban--Zdomskyy \cite{sfh}.

We conclude the section with some easy improvements of statements made above.

Define the following subfamily of $\rmO(X)$:
$\cU\in\Ga(X)$ if $\cU$ is infinite, and each element of $X$ is contained in all but
finitely many members of $\cU$. If $\cU\in\Ga(X)$, then every infinite subset of $\cU$ belongs
to $\Ga(X)$. Thus, we may assume for our purposes that elements of $\Ga(X)$ are countable.

\bcor[Just, et al.\ \cite{coc2}]\label{jj}
$\sone(\Gamma,\rmO)$ implies $\sfin(\rmO,\rmO)$.
\ecor
\bpf
Let $X$ be a set of reals satisfying $\sone(\Gamma,\rmO)$, and let $\cU_1,\cU_2,\dots\in\rmO(X)$.
The claim is trivial if some $\cU_n$ contains a finite subcover. Thus, assume that this is not the case.

As sets of reals are Lindel\"of, we may assume that each $\cU_n$ is countable, say $\cU_n=\{U^n_m : m\in\N\}$.
Let
$$\cV_n=\left\{\Union_{k\le m}U^n_k : m\in\N\right\}.$$
Then $\cV_n\in\Ga(X)$. Applying  $\sone(\Gamma,\rmO)$ there are $m_n$, $n\in\N$, such that
$\sseq{\Union_{k\le m_n}U^n_k}$ is a cover of $X$. For each $n$, the finite sets $\cF_n=\{U^n_k : k\le m_n\}\sbst\cU_n$
are as required in the definition of $\sfin(\rmO,\rmO)$.
\epf

A modification of the proof of Lemma \ref{md} yields the following.

\blem[Just, et al.\ \cite{coc2}]
$\non(\sone(\Gamma,\rmO))=\fd$.\qed
\elem
\bpf
By Corollary \ref{jj} and Lemma \ref{md},
$$\non(\sone(\Gamma,\rmO))\le\non(\sfin(\rmO,\rmO))=\fd.$$
To prove the remaining inequality, assume that $|X|<\fd$, and $\cU_1,\cU_2,\allowbreak\dots\in\Ga(X)$.
We may assume that for each $n$, $\cU_n$ is countable, and enumerate it $\cU_n=\{U^n_m : m\in\N\}$.
For each $x\in X$, let
$$a_x(n)=\min\{k>a_x(n-1) : (\forall m\ge k)\ x\in U^n_m\}$$
for all $n$. (In the case $n=1$, omit the restriction $k>a_x(n-1)$.)
$|\{a_x : x\in X\}|<\fd$. Let $d\in\roth$ exemplify that $\{a_x : x\in X\}$ is not dominating,
and take $\cF_n=\{U^n_1,\dots,U^n_{d(n)}\}$. Then each $x\in X$ belongs to $\Union\cF_n$
for infinitely many $n$.
\epf

\bcor\label{dcon}
Each set which is $\fd$-concentrated on a countable subset, satisfies $\sone(\Ga,\rmO)$.\qed
\ecor

\bcor[Bartoszy\'nski--Tsaban \cite{ideals}]\label{MenSol}
For each $\fd$-scale $S$, $S\cup\Fin$ satisfies $\sone(\Ga,\rmO)$.\qed
\ecor

$\sone(\Ga,\rmO)$ is strictly stronger that $\sfin(\rmO,\rmO)$.
While every $\sigma$-compact set satisfies the latter, we have the following.

\blem[Just, et al.\ \cite{coc2}]\label{noperf}
If $X$ satisfies $\sone(\Ga,\rmO)$, then $X$ has no perfect subsets.
\elem
\bpf
We give Sakai's proof \cite[Lemma 2.1]{Sakai07}. Assume that $X$ has a perfect subset and satisfies $\sone(\Ga,\rmO)$.
Then $X$ has a subset $C$ homeomorphic to Cantor's space $\Cantor$. $C$ is compact, and thus
closed in $X$, and therefore satisfies $\sone(\Ga,\rmO)$ as well.\footnote{It is easy to see that all properties involving
open covers, considered in this paper, are hereditary for closed subsets \cite{coc2}.}
Thus, it suffices to show that $\Cantor$ does not satisfy $\sone(\Ga,\rmO)$.
We show instead that its homeomorphic copy $(\Cantor)^\N$ does not satisfy $\sone(\Ga,\rmO)$.

Let $C_1,C_2,\dots$ be pairwise disjoint nonempty clopen subsets of $\Cantor$.
Let $U_1,U_2,\dots$ be the complements of $C_1,C_2,\dots$, respectively.
For each $n$, let $\pi_n:(\Cantor)^\N\to\Cantor$ be the projection on the $n$-th coordinate.
Then $\cU_n=\{\pi_n\inv[U_m] : m\in\N\}\in\Gamma(X)$ for all $n$.
But for all $\pi_1\inv[U_{m_1}]\in\cU_1,\pi_2\inv[U_{m_2}]\in\cU_2,\dots$, we have
that $\Pi_nC_n$ is disjoint of $\Union_n\pi_n\inv[U_{m_n}]$.
\epf

\section{Hurewicz's Conjecture}

Hurewicz suspected that Menger's Conjecture was false.
For this reason, he introduced in \cite{Hure25} a formally stronger property, which in our
notation is $\ufin(\rmO,\Ga)$. It is easy to see that every $\sigma$-compact set satisfies, in
fact, $\ufin(\rmO,\Ga)$, and analogously to Menger, Hurewicz made the following.

\bcnj[Hurewicz \cite{Hure25}]
A metric space $X$ satisfies $\ufin(\rmO,\Ga)$ if, and only if, $X$ is $\sigma$-compact.
\ecnj

The following easy fact is instructive.

\blem\label{moresadd}
$X$ satisfies $\ufin(\rmO,\Gamma)$ if, and only if, for all $\cU_1,\cU_2,\dots$, none having a finite subcover of $X$,
there is a decomposition $X=\Union_kX_k$, such that for each $k$, there are finite subsets $\cF^k_1\sbst\cU_1,\cF^k_2\sbst\cU_2,\dots$,
such that for each $x\in X_k$, $x\in\Union\cF^k_n$ for all but finitely many $n$.
\elem
\bpf
For each $n$, take $\cF_n=\Union_{k\le n}\cF^k_n$. Then $\sseq{\Union\cF_n}\in\Ga(X)$.
\epf


$S\sbst\R$ is a \emph{Sierpi\'nski set} if $S$ is uncountable, and
for each Lebesgue measure zero set $N$, $S\cap N$ is countable.
Since every perfect set contains a perfect set of Lebesgue measure zero,
a Sierpi\'nski set cannot contain a perfect subset, and therefore is not $\sigma$-compact (Lemma \ref{ksp}).
A construction similar to that of a Luzin set described above, shows that
\CH{} implies the existence of Sierpi\'nski sets.
We do not know when the following observation was made first.

\bthm[folklore]\label{SH}
Every Sierpi\'nski set satisfies $\ufin(\rmO,\Ga)$.
\ethm
\bpf
The following proof is a slightly simplified version of the one given in \cite{coc2}.

Let $S$ be a Sierpi\'nski set. $S=\Union_nS\cap[-n,n]$, and thus by Lemma \ref{moresadd},
we may assume that the outer measure $p$ of $S$ is finite.
Since $S$ is Sierpi\'nski, $p>0$.\footnote{Otherwise, $S$ would have measure zero, and thus be countable.}
Let $B\spst S$ be a Borel set of measure $p$.

Let $\cU_1,\cU_2,\dots$ be open covers of $S$. We may assume that each $\cU_n$ is countable,
and enumerate $\cU_n=\{U^n_m : m\in\N\}$. We may assume that all $U^n_m$ are Borel subsets of $B$.
For each $n$, $\Union_m U^n_m\spst S$, and thus has measure $p$ for each $n$.
Thus, for each $N$ there is $f_N\in\NN$ such that $\Union_{k=1}^{f_N(n)} U^n_k$ has measure $\ge (1-1/2^{n+N})p$,
and consequently, $A_N=\bigcap_n\Union_{k=1}^{f_N(n)} U^n_k$ has measure $\ge (1-1/2^N)p$.

Then $A=\Union_NA_N$ has measure $p$, and thus $S\sm A$ is countable.
The countable decomposition $S=(S\sm A)\cup \Union_N A_N$ is as required in Lemma \ref{moresadd},
by the countability of $S\sm A$ and the definition of $A_N$.
\epf

A stronger statement can be proved in a similar manner.

\bthm[Just, et al.\ \cite{coc2}]
Every Sierpi\'nski set satisfies $\sone(\Ga,\Ga)$ (even when we consider Borel covers instead of open ones).
\ethm
\bpf
Replace, in the proof of Theorem \ref{SH}, $U^n_m$ by $\bigcap_{k\ge m}U^n_k$.
Let $f\in\NN$ be such that for each $x\in S\sm A$, $x\in \bigcap_{k\ge f(n)}U^n_k$ for all but finitely many $n$.
Let $g$ be a $\le^*$-bound of $\{f_N : N\in\N\}\cup\{f\}$. Then the choice $U^1_{g(1)}\in\cU_1,U^2_{g(2)}\in\cU_2,\dots$ is as required.
\epf

Thus, \CH{} implies the failure of Hurewicz's Conjecture.
A complete refutation, however, was only discovered in 1996, by Just, Miller, Scheepers, and Szeptycki,
in their seminal paper \cite{coc2}.

\bthm[Just, et al.\ \cite{coc2}]
Hurewicz's Conjecture is false.
\ethm

We will not provide the full solution from \cite{coc2} here (since we provide a simpler one),
but just discuss its main ingredients. The argument in \cite{coc2} is dichotomic.
Recall that $\fb$ is the minimal cardinality of a set $B\sbst\roth$ which is unbounded with respect to $\le^*$.
A proof similar to that of Lemma \ref{md} gives the following two results, which are also essentially due to
Hurewicz \cite{Hure27}.

\blem[folklore]\label{bnh}
An unbounded subset of $\roth$ cannot satisfy $\ufin(\rmO,\Ga)$.\qed
\elem

\blem[folklore]\label{hb}
$\non(\sone(\Ga,\Ga))=\non(\ufin(\rmO,\Ga))=\fb$.\qed
\elem

Thus, if $\fb>\aleph_1$ then any set of cardinality $\aleph_1$ is a counter-example to Hurewicz's Conjecture.


\bdfn
A \emph{$\fb$-scale} is an unbounded set $\{b_\alpha : \alpha<\fb\}\sbst\roth$, such that
the enumeration is increasing with respect to $\le^*$ (i.e., $b_\alpha\le^* b_\beta$ whenever
$\alpha<\beta<\fb$).
\edfn

Like $\fd$-scales, $\fb$-scales can be constructed without special hypotheses.

\blem[folklore]\label{bsc}
There are $\fb$-scales.
\elem
\bpf
Let $\{x_\alpha : \alpha<\fb\}\sbst\roth$ be unbounded. For each $\alpha<\fb$,
choose $b_\alpha$ to be a $\le^*$-bound of $\{b_\beta : \beta<\alpha\}$, such that $x_\alpha\le^* b_\alpha$.
\epf

The argument in \cite{coc2} proceeds as follows. We have just seen that the case $\fb>\aleph_1$ is trivial.
Thus, assume that $\fb=\aleph_1$. Then there is a $\fb$-scale $B=\{b_\alpha : \alpha<\fb\}\sbst\roth$ such that
in addition, for all $\alpha<\beta<\fb$, $b_\beta\sm b_\alpha$ is finite.\footnote{We will not use this fact here,
but here is a proof: Fix an unbounded family $\{x_\alpha : \alpha<\fb\}\sbst\roth$.
At step $\alpha$, we have a countable set $B_\alpha=\{b_\beta : \beta<\alpha\}$ such that
for all $\gamma<\beta<\fb$, $b_\beta\sm b_\gamma$ is finite. In particular, each finite subset of $B_\alpha$
has an infinite intersection. Enumerate $B_\alpha=\sseq{s_n}$, and for each $n$ pick $m_n\in s_1\cap\dots\cap s_n$
such that $m_n>m_{n-1}$. Let $c$ be a $\le^*$-bound of $B_\alpha$, and let
$b_\alpha$ be a subset of $\sseq{m_n}$, such that $\max\{c,x_\alpha\}\le^* b_\alpha$.}
It is proved in
\cite{coc2} that for such $B$, $B\cup\Fin$ satisfies $\ufin(\rmO,\Ga)$.
An argument similar to the one given in Theorem \ref{FMThm} for scales shows the following.

\blem
Every $\fb$-scale $B$ is $\fb$-concentrated on $\Fin$. In particular, $B\cup\Fin$ is not $\sigma$-compact.\qed
\elem

Unfortunately, the existence of $\fb$-scales as in the proof of \cite{coc2} is undecidable.
This is so because Scheepers proved that for this type of $\fb$-scales, $B\cup\Fin$ in fact satisfies $\sone(\Gamma,\Gamma)$ \cite{alpha_i}
(see also \cite{BBC}), and we have the following.

\bthm[Miller--Tsaban \cite{BBC}]\label{miltsa}
It is consistent that for each set of reals satisfying $\sone(\Ga,\Ga)$, $|X|<\fb$.
Indeed, this is the case in Laver's model.
\ethm

Bartoszy\'nski and Shelah have discovered an ingenious direct solution to Hurewicz's Conjecture,
which can be reformulated as follows.

\bthm[Bartoszy\'nski--Shelah \cite{BaSh01}]\label{BShThm}
For each $\fb$-scale $B$, $B\cup\Fin$ satisfies $\ufin(\rmO,\Ga)$.
\ethm

We provide a simplified proof of this theorem, using a method of Galvin and Miller from \cite{GM}.
For natural numbers $n,m$, let $[n,m)=\{n,n+1,\dots,m-1\}$.

\blem[folklore]\label{bddslalom}
Let $Y\sbst\roth$. The following are equivalent:
\be
\itm $Y$ is bounded;
\itm There is $s\in\roth$ such that for each $a\in Y$, $a\cap\intvl{s}{n}\neq\emptyset$ for all
but finitely many $n$.
\ee
\elem
\begin{proof}
$(1\Impl 2)$ Let $b\in\roth$ be a $\le^*$-bound for $Y$.
Define inductively $s\in\roth$ by
\begin{eqnarray*}
s(1) & = & b(1)\\
s(n+1) & = & b(s(n))+1
\end{eqnarray*}
For each $a\in Y$ and all but finitely many $n$, $s(n)\le a(s(n))\le b(s(n))<s(n+1)$, that
is, $a(s(n))\in\intvl{s}{n}$.

$(2\Impl 1)$ Let $s$ be as in $(2)$.
$s$ has countably many cofinite subsets.
Let $b\in\roth$ be a $\le^*$-bound of all cofinite subsets of $s$.
Let $a\in Y$ and choose $n_0$ such that for each
$n\ge n_0$, $a\cap\intvl{s}{n}\neq\emptyset$.
Choose $m_0$ such that $a(m_0)\in\intvl{s}{n_0}$.
By induction on $n$, we have that $(a(n)\le) a(m_0+n)\le s(n_0+1+n)$
for all $n$. For large enough $n$, we have that $s(n_0+1+n)\le b(n)$,
thus $a\le^* b$.
\end{proof}

\blem[Galvin--Miller \cite{GM}]\label{GMlem}
Assume that $\Fin\sbst X\sbst P(\N)$.
For each $\cU\in\Ga(X)$,\footnote{Less than that is required of the given covers. See the proof.}
there are $a\in\roth$ and distinct $U_1,U_2,\dots\in\cU$,
such that for each $x\sbst\N$, $x\in U_n$ whenever $x\cap\intvl{a}{n}=\emptyset$.
\elem
\bpf
Let $a(1)=1$. For each $n\ge 1$: As $\cU\in\Ga(X)$, each finite subset of $X$ is contained in
infinitely many elements of $\cU$. Take $U_n\in\cU\sm\{U_1,\dots,U_{n-1}\}$,
such that $P([1,a(n)))\sbst U_n$.
As $U_n$ is open, for each $s\sbst [1,a(n))$ there is $k_s$ such that
for each $x\in P(\N)$ with $x\cap[1,k_s)=s$, $x\in U_n$. Let $a(n+1)=\max\{k_s : s\sbst[1,a(n))\}$.
\epf

Given the methods presented thus far, the following proof boils down to the fact that,
if we throw fewer than $n$ balls into $n$ bins, at least one bin remains empty.

\bpf[Proof of Theorem \ref{BShThm}]
Let $B=\{b_\alpha : \alpha<\fb\}$ be a $\fb$-scale. Let $\cU_1,\cU_2,\dots$ be open covers of $B\cup\Fin$.
By the argument in the proof of Corollary \ref{jj}, we may assume that each $\cU_n$ is a point-cofinite
cover of $B\cup\Fin$.

For each $n$, take $a_n$ and distinct $U^n_1,U^n_2,\dots$ for $\cU_n$ as in Lemma \ref{GMlem}.
We may assume that $a_n(1)=1$.
Let $\alpha$ be such that $I=\{n : a_n(n+1)<b_\alpha(n)\}$ is infinite.
As $|\{b_\beta : \beta<\alpha\}|<\fb$, $\{b_\beta : \beta<\alpha\}$ satisfies $\sone(\Ga,\Ga)$ (Lemma \ref{hb}).
Thus, there are $m_n$, $n\in I$, such that $\{U^n_{m_n} : n\in I\}\in\Ga(\{b_\beta : \beta<\alpha\})$.
Take $\cF_n=\emptyset$ for $n\nin I$, and $\cF_n=\{U^n_1,\dots,U^n_n\}\cup\{U^n_{m_n}\}$ for
$n\in I$.

As $\sseq{\Union\cF_n}=\{\Union\cF_n : n\in I\}\cup\{\emptyset\}$,
it suffices to show that for each $x\in X$, $x\in\Union\cF_n$ for all but finitely many $n\in I$.
If $x\in\Fin$, then for each large enough $n\in I$, $x\cap \intvl{a_n}{n}=\emptyset$ (because
$a_n(n)\ge n$), and thus $x\in U^n_{n}\in\cF_n$.
For $\beta<\alpha$, $b_\beta\in U^n_{m_n}\sbst\Union\cF_n$ for all large enough $n$.

For $\beta\ge\alpha$ (that's the interesting case!)
and all but finitely many $n\in I$, $b_\beta(n)\ge b_\alpha(n)>a_n(n+1)$. Thus, $|b_\beta\cap [1,a_n(n+1))|<n$.
As $[1,a_n(n+1))=\Union_{i=1}^{n} \intvl{a_n}{i}$ is a union of $n$ intervals,
there must be $i\le n$ such $b_\beta\cap\intvl{a_n}{i}=\emptyset$, and thus $b_\beta\in U^n_i\sbst\Union\cF_n$.
\epf

A multidimensional version of the last proof gives the following.

\bthm[Bartoszy\'nski--Tsaban \cite{ideals}]
For each $\fb$-scale $B$, all finite powers of the set $B\cup\Fin$ satisfy $\ufin(\rmO,\Ga)$.\qed
\ethm

Indeed, Zdomskyy and the present author proved in \cite{sfh} that any finite product
$(B_1\cup\Fin)\x\dots\x(B_1\cup\Fin)$, with $B_1,\dots,B_k$ $\fb$-scales, satisfies $\ufin(\rmO,\Ga)$.

In a work in progress, the method introduced here is used to prove the following,
substantially stronger, result.

\bthm[Miller--Tsaban--Zdomskyy]
For each $\fb$-scale $B$ and each set of reals $H$ satisfying $\ufin(\rmO,\Ga)$, $(B\cup\Fin)\x H$ satisfies $\ufin(\rmO,\Ga)$.
\ethm

\section{Strongly Hurewicz sets of reals, in ZFC}

Consider, for each $f\in\NN$, the following selection hypothesis.

\begin{description}
\item[$\mathsf{U}_{f}(\scrA,\scrB)$] For all $\cU_1,\cU_2,\dots\in\scrA$, none containing
a finite subcover, there are finite $\cF_1\sbst\cU_1,\cF_2\sbst\cU_2,\dots$ such that
such that $|\cF_n|\le f(n)$ for all $n$, and $\sseq{\Union\cF_n}\in\scrB$.
\end{description}

\brem
One may require in the definition of $\mathsf{U}_{f}(\scrA,\scrB)$ that each $\cF_n$ is nonempty.
This will not change the property when $\scrA,\scrB\in\{\rmO,\Ga\}$, since we may
assume that the given covers get finer and finer. This can be generalized to most types of
covers considered in the field.
\erem

$\mathsf{U}_{f}(\scrA,\scrB)$ depends only on $\limsup_nf(n)$.

\blem\label{anyggoes}
Assume that for each $\cV\in\scrB$, we have $\{\emptyset\}\cup\cV\in\scrB$, and 
every cofinite subset of $\cV$ is also in $\scrB$.
For all $f,g\in\NN$ with $\limsup_n f(n)=\limsup_n g(n)$,
$\mathsf{U}_{f}(\scrA,\scrB)=\mathsf{U}_{g}(\scrA,\scrB)$.
\elem
\bpf
The argument is as in the proofs of \cite[3.2--3.5]{GFTM95} and \cite[Lemma 3]{Val98a},
concerning similar concepts in other contexts.

Let $\cU_1,\cU_2,\dots\in\scrA(X)$.
Let $m_1<m_2<\dots$ be such that $f(n)\le g(m_n)$ for all but finitely many $n$.
Apply $\mathsf{U}_{f}(\scrA,\scrB)$ to the sequence $\cU_{m_1},\cU_{m_2},\dots$,
to obtain $\cF_{m_1}\sbst\cU_{m_1},\cF_{m_2}\sbst\cU_{m_2},\dots$, such that $|\cF_{m_n}|\le f(n)$ for
all $n$, and $\sseq{\Union\cF_{m_n}}\in\scrB(X)$. For $k\nin\sseq{m_n}$ we can take $\cF_k=\emptyset$.
Then $\sseq{\Union\cF_n}=\{\emptyset\}\cup\sseq{\Union\cF_{m_n}}\in\scrB(X)$, and $|\cF_n|\le g(n)$ for all but finitely many $n$. Changing finitely many additional sets $\cF_n$
to $\emptyset$, we have $|\cF_n|\le g(n)$ for all $n$. 
\epf

Thus, for each $f\in\NN$ with $\limsup_nf(n)=\oo$, $\mathsf{U}_{f}(\scrA,\scrB)=\mathsf{U}_{\mathrm{id}}(\scrA,\allowbreak\scrB)$,
where $\mathrm{id}$ is the identity function,  $\mathrm{id}(n)=n$ for all $n$. We henceforth use the notation
$$\mathsf{U}_{n}(\scrA,\scrB)$$
for $\mathsf{U}_{\mathrm{id}}(\scrA,\scrB)$.

Our proof of Theorem \ref{BShThm} shows the following.

\bthm\label{main}
For each $\fb$-scale $B$, $B\cup\Fin$ satisfies $\mathsf{U}_n(\Ga,\Ga)$.
\ethm
\bpf
In the proof of Theorem \ref{BShThm} we show that $B\cup\Fin$ satisfies $\mathsf{U}_{n+1}(\Ga,\Ga)$.
By Lemma \ref{anyggoes}, this is the same as $\mathsf{U}_n(\Ga,\Ga)$.
\epf

We will soon show that $\mathsf{U}_n(\Ga,\Ga)$ is strictly stronger than $\ufin(\rmO,\Ga)$.

A cover $\cU$ of $X$ is \emph{multifinite} \cite{strongdiags} if there exists a
partition of $\cU$ into infinitely many finite covers of $X$.
Let $\scrA$ be a family of covers of $X$. $\gimel(\scrA)$
is the family of all covers $\cU$ of $X$ such that:
Either $\cU$ is multifinite, or there exists a partition $\mathcal{P}$ of $\cU$
into finite sets such that
$\{\bigcup\mathcal{F}:\mathcal{F}\in\mathcal{P}\}\setminus\{X\}\in\scrA$ \cite{GlCovs}.

The special case $\gimel(\Ga)$ was first studied by Ko\v{c}inac and Scheepers \cite{coc7},
where it was proved that $\ufin(\rmO,\Ga)=\sfin(\Omega,\gimel(\Ga))$.
Additional results of this type are available in
Babinkostova--Ko\v{c}inac--Scheepers \cite{coc8}, and in general form in Samet--Scheepers--Tsaban \cite{GlCovs}.

\bthm[Samet, et al.\ \cite{GlCovs}]\label{rduc}
$\ufin(\Ga,\gimel(\Ga))=\sfin(\Ga,\allowbreak\gimel(\Ga))$.
\ethm

\bthm\label{ungimel}
$\mathsf{U}_n(\Ga,\Ga)$ implies $\sone(\Ga,\gimel(\Ga))$.
\ethm
\bpf
We prove the following, stronger statement: Assume that $X$ satisfies $\mathsf{U}_n(\Ga,\Ga)$, and let $s(n)=1+\dots+n=(n+1)n/2$.
For all $\cU_1,\cU_2,\dots\in\Ga(X)$, there are $U_1\in\cU_1,U_2\in\cU_2,\dots$, such that for each $x\in X$,
$x\in \Union_{k=s(n)}^{s(n+1)}U_k$ for all but finitely many $n$.

Let $\cU_1,\cU_2,\dots\in\Ga(X)$. We may assume that for each $n$, $\cU_{n+1}$ refines $\cU_n$.
Apply $\mathsf{U}_n(\Ga,\Ga)$ to $\cU_{s(1)},\cU_{s(2)},\dots$ to obtain
$U_1\in\cU_{s(1)},U_2,U_3\in\cU_{s(2)},\dots$, such that for each $x\in X$,
$x\in\Union_{k=s(n)+1}^{s(n+1)}U_k$ for all but finitely many $n$.
For each $n$ and each $k=s(n)+1,\dots,s(n+1)$, replace $U_k$ by an equal or larger set from $\cU_k$.
\epf

\brem
The statement at the beginning of the last proof is in fact a characterization of $\mathsf{U}_n(\Ga,\Ga)$.
\erem

\brem\label{ttt}
In general, if every pair of elements of $\scrA$ has a joint refinement in $\scrA$, and $\scrB$ is finitely thick in the sense
of \cite{strongdiags}, then $\mathsf{U}_n(\scrA,\scrB)$ implies $\sone(\scrA,\gimel(\scrB))$.

In particular, when $\scrB=\rmO$, $\gimel(\scrB)=\rmO$, and thus $\mathsf{U}_n(\scrA,\rmO)=\sone(\scrA,\rmO)$.
For example, $\mathsf{U}_n(\Ga,\rmO)=\sone(\Ga,\rmO)$.
\erem

Thus, the Bartoszy\'nski--Shelah Theorem tells that for each $\fb$-scale $B$, $B\cup\Fin$ satisfies
$\sfin(\Ga,\gimel(\Ga))$, whereas Theorem \ref{main} tells that it indeed satisfies $\sone(\Ga,\gimel(\Ga))$.
As $\ufin(\rmO,\Ga)$ does not even imply $\sone(\Ga,\rmO)$ (Lemma \ref{noperf}),
we have that $\mathsf{U}_n(\Ga,\Ga)$ is strictly stronger than $\ufin(\rmO,\Ga)$.

\bthm[Tsaban--Zdomskyy \cite{HurPerf}]\label{nogg}
Assume \CH{} (or just $\fb=\fc$).
There is a $\fb$-scale $B$ such that no set of reals containing $B\cup\Fin$ satisfies $\sone(\Ga,\Ga)$.
\ethm

By Theorems \ref{main} and \ref{nogg}, $\mathsf{U}_n(\Ga,\Ga)\neq\sone(\Gamma,\Gamma)$.
Thus, $\mathsf{U}_n(\Ga,\Ga)$ is strictly in between $\sone(\Gamma,\Gamma)$ and $\ufin(\rmO,\Ga)$.

A natural refinement of the Problem 9, solved in Theorem \ref{nogg}, is the following.

\bprb[Zdomskyy]
Is there a set of reals $X$ without perfect subsets, such that $X$ satisfies $\ufin(\rmO,\Ga)$
but not $\mathsf{U}_n(\Ga,\Ga)$?
\eprb


\section{A visit at the border of ZFC}

By Lemma \ref{anyggoes}, there are only the following kinds of (strongly) Hurewicz properties:
$\ufin(\Ga,\Ga)$, $\mathsf{U}_n(\Ga,\Ga)$, and $\mathsf{U}_c(\Ga,\Ga)$, for constants $c\in\N$.
For $c=1$, $\mathsf{U}_c(\Ga,\Ga)=\sone(\Ga,\Ga)$, and thus by
the results of the previous section, at least three of these properties are distinct. (We
consider properties \emph{distinct} if they are not provably equivalent.)

By Theorem \ref{miltsa}, $\mathsf{U}_1(\Ga,\Ga)$ may be trivial. The next strongest property
is $\mathsf{U}_2(\Ga,\Ga)$. We prove that it is not trivial.

\bdfn
Let $s,a\in\roth$.
$s$ \emph{slaloms}\footnote{Short for ``is a slalom for''.} $a$ if
$a\cap\intvl{s}{n}\neq\emptyset$ for all but finitely many $n$.
$s$ \emph{slaloms} a set $Y\sbst\roth$ if it slaloms each $a\in Y$.
\edfn

By Lemma \ref{bddslalom}, a set $Y\sbst\roth$ is bounded if, and only
if, there is $s$ which slaloms $Y$.

\bdfn
A \emph{slalom $\fb$-scale} is an unbounded set $\{b_\alpha : \alpha<\fb\}\sbst\roth$,
such that $b_\beta$ slaloms $b_\alpha$ for all $\alpha<\beta<\fb$.
\edfn

By Lemma \ref{bddslalom}, we have the following.

\blem\label{bscip}
There are slalom $\fb$-scales.\qed
\elem

We are now ready to prove the main result of this paper.

\bthm\label{main2}
For each slalom $\fb$-scale $B$, $B\cup\Fin$ satisfies $\mathsf{U}_2(\Ga,\Ga)$.
\ethm
\bpf
Let $B=\{b_\alpha : \alpha<\fb\}$ be a slalom $\fb$-scale. Let $\cU_1,\cU_2,\dots\in\Ga(B\cup\Fin)$.

For each $n$, take $a_n\in\roth$ and distinct $U^n_1,U^n_2,\dots$ for $\cU_n$ as in Lemma \ref{GMlem}.
We may assume that $a_n(1)=1$.
Let $a\in\roth$ slalom $\{a_n : n\in\N\}$.
As $B$ is unbounded, there is by Lemma \ref{bddslalom} $\alpha<\fb$,
such that $I=\{m : [a(m),a(m\splus3))\cap b_\alpha=\emptyset\}$
is infinite. (Otherwise, $\sseq{a(3n)}$ would slalom $B$.)
For each $n$, let
$$I_n=\{m\ge n : [a_n(m),a_n(m\splus2))\cap b_\alpha=\emptyset\}.$$
As $a$ slaloms $a_n$, $I_n$ is infinite, and therefore $\{U^n_m : m\in I_n\}\in\Ga(B\cup\Fin)$.

As $|\{x_\beta : \beta<\alpha\}|<\fb$, $\{x_\beta : \beta<\alpha\}$ satisfies $\sone(\Ga,\Ga)$ (Lemma \ref{hb}),
and thus, there are $m_n\in I_n$, $n\in\N$, such that $\{U^n_{m_n} : n\in\N\}\in\Ga(\{x_\beta : \beta<\alpha\})$.
We claim that
$$\{U^n_{m_n}\cup U^n_{m_n+1} : n\in\N\}\in\Ga(B\cup\Fin).$$
If $x\in\Fin$, then for each large enough $n$, $x\cap \intvl{a_n}{m_n}=\emptyset$ (because $m_n\ge n$),
and thus $x\in U^n_{m_n}$.
For $\beta<\alpha$, $b_\beta\in U^n_{m_n}$ for all large enough $n$, by the choice of $m_n$.

For $\beta\ge\alpha$ (that's the interesting case), we have the following:
Let $m_n\in I_n$, and let $k$ be such that
$$b_\alpha(k)<a_n(m_n)<a_n(m_n+2)\le b_\alpha(k+1).$$
If $n$ is large, then $k$ is large, and as $b_\beta$ slaloms $b_\alpha$, there is $i$ such that
$$b_\beta(i)\le b_\alpha(k)<a_n(m_n)<a_n(m_n+2)\le b_\alpha(k+1)<b_\beta(i+2).$$
There are two possibilities for $a_n(m_n+1)$:
If $a_n(m_n\splus1)\le b_\beta(i+1)$, then $\intvl{a_n}{m_n}\cap b_\beta=\emptyset$, and thus $b_\beta\in U^n_{m_n}$.
Otherwise, $a_n(m_n\splus1)>b_\beta(i+1)$, and thus $[a_n(m_n\splus1),a_n(m_n\splus2))\cap b_\beta=\emptyset$.
Therefore, $b_\beta\in U^n_{m_n+1}$ in this case.
\epf

\bthm
Assume \CH{} (or just $\fb=\fc$).
There is a slalom $\fb$-scale $B$ such that $B\cup\Fin$ satisfies $\mathsf{U}_2(\Ga,\Ga)$, but
no set of reals containing $B\cup\Fin$ satisfies $\sone(\Ga,\Ga)$.
\ethm
\bpf
Consider the proof of Theorem \ref{nogg}, given in \cite{HurPerf}.
We need only make sure that in Proposition 2.5 of \cite{HurPerf},
$B$ can be constructed in a way that it is a \emph{slalom} $\fb$-scale.
This should be taken care of in the second paragraph of page 2518.

At step $\alpha<\fb$ of this construction, we are given a set $Y$ with $|Y|=|\alpha|<\fb$,
and a set $a_\alpha\in\roth$. Take an infinite $b_\alpha\sbst a_\alpha$ such
that $b_\alpha$ slaloms $Y$. (E.g., take a slalom $b$ for $Y$, and then define $b_\alpha\sbst a_\alpha$
by induction on $n$, such that for each $n$, $|b\cap\intvl{b_\alpha}{n}|\ge 2$.)
By induction on $n$,
thin out $b_\alpha$ such that it
satisfies the displayed inequality there for all $n$. $b_\alpha$ remains a slalom for
$Y$.

Theorem \ref{main2} guarantees that $B\cup\Fin$ satisfies $\mathsf{U}_2(\Ga,\Ga)$.
\epf

By Theorem \ref{miltsa}, it is consistent that $\sone(\Ga,\Ga)$ is trivial,
whereas by Theorem \ref{main2}, $\mathsf{U}_2(\Ga,\Ga)$ is never trivial.
The following remains open.

\bcnj
$\mathsf{U}_2(\Ga,\Ga)$ is strictly stronger than $\mathsf{U}_n(\Ga,\Ga)$.
\ecnj

\section{The Hurewicz Problem}

In the same 1927 paper Hurewicz asked the following.

\bprb[Hurewicz \cite{Hure27}]
Is there a metric space satisfying $\sfin(\rmO,\allowbreak\rmO)$, but not $\ufin(\rmO,\Ga)$?
\eprb

In a footnote added at the proof stage (the same one mentioned before
Theorem \ref{LM}), Hurewicz quotes the following, which solves his problem if \CH{} is assumed.

\bthm[Sierpi\'nski]\label{hhh}
Every Luzin set satisfies $\sfin(\rmO,\rmO)$, but not $\ufin(\rmO,\Ga)$.
\ethm

\bpf
Let $L$ be a Luzin set. We have already proved that $L$ satisfies $\sfin(\rmO,\rmO)$ (Theorem \ref{LM}).
It remains to show that $L$ does not satisfy $\ufin(\rmO,\Ga)$.

As $L$ contains no perfect sets, $\R\sm L$ is dense in $\R$. Fix a countable dense $D\sbst\R\sm L$.
$\R\sm D$ is homeomorphic to $\R\sm\Q$,\footnote{$D$ is order-isomorphic to $\Q$. An order
isomorphism $f:D\to\Q$ extends uniquely to and order isomorphism $f:\R\to\R$ by setting $f(r)=\sup\{f(d) : d<r\}$.
The restriction of $f$ to $\R\sm D$ is a homeomorphism.}
which in turn is homeomorphic to $\roth$ (e.g., using continued fractions).

As $L\sbst\R\sm D$, we may assume that $L\sbst\roth$.\footnote{If $L$ is a Luzin set in a topological space
$X$ and $f:X\to Y$ is a homeomorphism, then $f[L]$ is a Luzin set in $Y$, since ``being meager''
is preserved by homeomorphisms.}
By Lemma \ref{bnh}, it suffices to show that $L$ is unbounded.
For each $b\in\roth$, the set
$$\{a\in\roth : a\le^* b\}=\Union_{n\in\N}\{a\in\roth : (\forall m\ge n)\ a(m)\le b(m)\},$$
with each $\{a\in\roth : (\forall m\ge n)\ a(m)\le b(m)\}$ nowhere dense. Thus,
$\{a\in\roth : a\le^* b\}$ is meager, and therefore does not contain $L$.
\epf

Hurewicz's problem remained, however, open until the end of 2002.

\bthm[Chaber--Pol \cite{ChaPol}]\label{CPThm}
There is a set of reals satisfying $\sfin(\rmO,\allowbreak\rmO)$ but not $\ufin(\rmO,\Ga)$.
\ethm

Chaber and Pol's proof is topological and uses a technique due to Michael.
The following combinatorial proof contains the essence of their proof.

\bpf[Proof of Theorem \ref{CPThm}]
The proof is dichotomic. If $\fb<\fd$, then any unbounded $B\sbst\roth$ of cardinality $\fb$
satisfies $\sfin(\rmO,\rmO)$ (Lemma \ref{md}) but not $\ufin(\rmO,\Ga)$ (Lemma \ref{bnh}).

\blem\label{hh}
For each $s\in\roth$, there is $a\in\roth$ such that: $a\comp=\N\sm a\in\roth$,
$a\not\le^* s$, and $a\comp\not\le^* s$.
\elem
\bpf
Let $m_1>s(1)$. For each $n>1$, let $m_n>s(m_{n-1})$. Let $a=\Union_n[m_{2n-1},m_{2n})$.
For each $n$:
\begin{eqnarray*}
a(m_{2n}) & \ge &  m_{2n+1}>s(m_{2n});\\
a\comp(m_{2n-1}) & \ge &  m_{2n}>s(m_{2n-1}).\qedhere
\end{eqnarray*}
\epf

So, assume that $\fb=\fd$.
Fix a scale $\{s_\alpha : \alpha<\fd\}\sbst\roth$.
For each $\alpha<\fd$, use Lemma \ref{hh} to pick $a_\alpha\in\roth$ such that:
\be
\itm $a_\alpha\comp=\N\sm a_\alpha$ is infinite;
\itm $a_\alpha\not\le^* s_\alpha$; and
\itm $a_\alpha\comp\not\le^* s_\alpha$.
\ee
Let $A=\{a_\alpha : \alpha<\fd\}$. For $b\in\roth$, let $\alpha<\fd$ be such that
$b<^* s_\alpha$. Then $\{\beta : a_\beta\le^* b\}\sbst\alpha$.
As in the proof of Theorem \ref{FMThm}, this implies that
$A$ is $\fd$-concentrated on $\Fin$, and thus $A\cup\Fin$ satisfies $\sfin(\rmO,\rmO)$
(indeed, $\sone(\Gamma,\rmO)$---Corollary \ref{dcon}).

On the other hand, $A\cup\Fin$ is homeomorphic to
$Y=\{x\comp : x\in A\cup\Fin\}$, which is an unbounded subset of $\roth$ (by item (3) of the construction).
By Lemma \ref{bnh}, $Y$ (and therefore $A\cup\Fin$) does not satisfy $\ufin(\rmO,\Ga)$.
\epf

The advantage of the last proof is its simplicity. However, it does not provide an
explicit example, and in the case $\fb<\fd$ gives a trivial example, i.e., one of cardinality smaller than
$\non(\sfin(\rmO,\rmO))$.
We conclude with an explicit solution.

\bthm[Tsaban--Zdomskyy \cite{sfh}]\label{TsZd}
There is a set of reals of cardinality $\fd$, satisfying $\sfin(\rmO,\rmO)$ (indeed, $\sone(\Gamma,\rmO)$),
but not $\ufin(\rmO,\Ga)$.
\ethm

Our original proof uses in its crucial step a topological argument.
Here, we give a more combinatorial argument, based on a (slightly
amended) lemma of Mildenberger.

A set $Y\sbst\roth$ is \emph{groupwise dense}
if:
\be
\itm $a\as y\in Y$ implies $a\in Y$; and
\itm For each $a\in\roth$, there is an infinite $I\sbst\N$ such that
$\Union_{n\in I}\allowbreak\intvl{a}{n}\in Y$.
\ee
For $Y$ satisfying $(1)$, $Y$ is groupwise dense if, and only if, $Y$ is nonmeager \cite{BlassHBK}.

\bpf[Proof of Theorem \ref{TsZd}]
Fix a dominating set $\{d_\alpha : \alpha<\fd\}$.
Define $a_\alpha\in\roth$ by induction on $\alpha<\fd$.
Step $\alpha$: Let $Y=\{d_\beta,a_\beta : \beta<\alpha\}$. $|Y|<\fd$.

The following is proved by  Mildenberger as part of the proof of \cite[Theorem 2.2]{Mild01},
except that we eliminate the ``next'' function from her argument.

\blem[Mildenberger \cite{Mild01}]
For each $Y\sbst\roth$ with $|Y|<\fd$, $G=\{a\in\roth : (\forall y\in Y)\ a\not\le^* y\}$ is groupwise
dense.
\elem
\bpf
Clearly, $G$ satisfies (1) of the definition of groupwise density. We verify (2).

We may assume that $Y$ is closed under maxima of finite subsets. Let $g\in\roth$ be
a witness that $Y$ is not dominating. Then the family of all sets $\{n : y(n)<g(n)\}$, $y\in Y$,
can be extended to a nonprincipal ultrafilter $\cU$.

Let $a\in\roth$. By thinning out $a$, we may assume that $g(a(n))<a(n+1)$ for all $n$.
For $i=0,1,2$, let
$$a_i=\Union_{n\in\N}[a(3n+i),a(3n+i+1)).$$
Then there is $i$ such that $a_i\in\cU$. We claim that $a_{i+2 \bmod 3}\in G$.
Let $y\in Y$. For each $k$ in the infinite set $\{n : y(n)<g(n)\}\cap a_i$, let
$n$ be such that $k\in [a(3n\splus i),a(3n\splus i\splus 1))$. Then
$$y(k)<g(k)<g(a(3n+i+1))<a(3n+i+2)\le a_{i+2 \bmod 3}(k),$$
because $a(3n\splus i\splus2)$ is the first element of $a_{i\splus2 \bmod 3}$ greater or equal to
$k$, and $a_{i+2 \bmod 3}(k)\ge k$.
\epf

Let $G=\{a\in\roth : (\forall y\in Y)\ a\not\le^* y\}$. As $G$ is groupwise dense,
there is $a_\alpha\in G$ such that $a_\alpha\comp$ is infinite and $a_\alpha\comp\not\le^* d_\alpha$.
To see this, take an interval partition as in the proof of Lemma \ref{hh}. Then there is
an infinite subfamily of the even intervals, whose union $a_\alpha$ is in $G$.
For each $n$ such that $[m_{2n-1},m_{2n})\sbst a_\alpha$,
$a\comp(m_{2n-1}) \ge m_{2n}>s(m_{2n-1})$.\footnote{Alternatively,
note that $\{a : a\comp\le^* d_\alpha\}$ is homeomorphic to the meager
set $\{a : a\le^* d_\alpha\}$, and thus cannot contain a groupwise dense (i.e., nonmeager) set.}

Thus, there is
$$a_\alpha\in \{a\in\roth : (\forall y\in Y)\ a\not\le^* y\}\sm \{a\in\roth : a\comp\le^* d_\alpha\}.$$
Continue exactly as in the above proof of Theorem \ref{CPThm}.
\epf

Chaber and Pol's Theorem in \cite{ChaPol} is actually stronger than Theorem \ref{CPThm} above,
and establishes the existence of a set of reals
$X$ such that $X$ does not satisfy $\ufin(\rmO,\Ga)$,\footnote{And thus neither any finite power of $X$, since $X$ is
a continuous image of $X^k$ for each $k$.}
but all finite powers of $X$ satisfy $\sfin(\rmO,\rmO)$.

Their proof shows that if $\fb=\fd$, then there is such
an example of cardinality $\fd$. The assumption ``$\fb=\fd$'' was weakened to ``$\fd$ is regular''
by Tsaban and Zdomskyy \cite{sfh}, but the following remains open.

\bprb
Is there, provably in ZFC, a nontrivial (i.e., one of cardinality at least $\fd$)
example of a set of reals such that $X$ does not satisfy $\ufin(\rmO,\Ga)$, but
all finite powers of $X$ satisfy $\sfin(\rmO,\rmO)$?
\eprb

In other words, the question whether there is a nondichotomic proof of Chaber and Pol's full theorem remains open.

\forget
\section{Order, please?}

The distinction between the various properties considered here is quite subtle, and led to quite a few confusions,
in the published as well as the unpublished literature. Let us try put order in these matters. By the
results presented and surveyed here, we have the following:

Fremlin--Miller's solution of Menger's Conjecture also gives, when $\fb=\fd$,
a counter-example for Hurewicz's Conjecture (because every scale is in particular a $\fb$-scale).
Thus, their argument does not solve Hurewicz's Problem.

Just--Miller--Scheepers--Szeptycki, and independently Bartoszy\'nski--Shelah, solve Hurewicz's Conjecture in ZFC,
and thus improve Fremlin--Miller's result, but do not solve Hurewicz's Problem.

Chaber--Pol solve Hurewicz's Problem, and thus improve Fremlin--Miller's result, but do obtain Just--Miller--Scheepers--Szeptycki's theorem.

We have proved here that Bartoszy\'nski--Shelah's construction actually satisfies $\mathsf{U}_n(\Ga,\Ga)$,
and presented an improved construction satisfying $\mathsf{U}_2(\Ga,\Ga)$.
To get the stronger property $\mathsf{U}_1(\Ga,\Ga)$ (which is $\sone(\Ga,\Ga)$), axioms beyond
ZFC become necessary (Miller--Tsaban).
\forgotten

\subsection*{Acknowledgments}
We thank Gabor Lukacs, Lyubomyr Zdomskyy and the referee for their useful comments,
which lead to improvements in the presentation of this paper.

\ed